\newcommand{\tf}{t_{\rm f}}
\newtheorem{theorem}{Theorem}[section]
\newtheorem{corollary}[theorem]{Corollary}
\theoremstyle{definition}
\newtheorem{definition}[theorem]{Definition}
\theoremstyle{remark}
\newtheorem{remark}[theorem]{Remark}
\def\XXint#1#2#3{{\setbox0=\hbox{$#1{#2#3}{\int}$}
     \vcenter{\hbox{$#2#3$}}\kern-.5\wd0}}
\newcommand{\pdiff}[2]{\frac{\partial #1}{\partial #2}}
\newcommand{\eps}{\varepsilon}
\newcommand{\mR}{\mathbb R}
\newcommand{\rd}{{\rm d}}
\pgfplotsset{compat=newest}
\tikzstyle{sum} = [draw, fill=white, thick, circle, inner sep=2pt,minimum size=2pt]
\tikzstyle{wheel} = [draw, fill=black, thick, circle, inner sep=3pt,minimum size=3pt]
\tikzset{
    dimen/.style={
        <->,
        >=latex,
        thick,
        every rectangle node/.style={
            fill=white,
            midway,
            font=\sffamily,
            }
        },%
}
\numberwithin{equation}{section}
\begin{document}
\author{Ross Drummond, Nicola E. Courtier, David A. Howey, Luis D. Couto, Chris Guiver
\thanks{ R. Drummond is with the Department of Automatic Control and Systems Engineering, University of Sheffield,  Mappin St, Sheffield, S1 3JD, United Kingdom. Email: {\tt ross.drummond@sheffield.ac.uk.}

N. E. Courtier and D. A. Howey are with the Department of Engineering Science, University of Oxford,  17 Parks Road, OX1 3PJ, Oxford, United Kingdom, Email: {\tt \{david.howey, nicola.courtier\}@eng.ox.ac.uk}.

L D. Couto is with the Department of Control Engineering and System Analysis, Universit\'{e} libre de Bruxelles, Brussels, B-1050, Belgium. Email: {\tt luis.daniel.couto.mendonca@ulb.be}.

C. Guiver is with the School of Engineering \& the Built Environment, Edinburgh Napier University, Edinburgh, EH10 5DT, UK. Email: {\tt c.guiver@napier.ac.uk}.

\date{\today}

Ross Drummond would like to thank the Royal Academy of Engineering for funding through a UK Intelligence Community research fellowship.

Chris Guiver would like to thank the Royal Society of Edinburgh (RSE) for funding through a RSE Personal Research Fellowship.

Nicola Courtier and David Howey were supported by the EPSRC Faraday Institution Multiscale Modelling project (EP/S003053/1, grant number FIRG025).
}
}

\IEEEoverridecommandlockouts


\title{Constrained optimal control of monotone  systems with applications to battery fast-charging}

\maketitle
\thispagestyle{plain}
\pagestyle{plain}

\begin{abstract}
Enabling fast charging for lithium ion batteries is critical to accelerating the green energy transition. As such, there has been significant interest in tailored fast-charging protocols computed from the solutions of constrained optimal control problems. Here, we derive necessity conditions for a fast charging protocol  based upon monotone control systems theory.

\end{abstract}

\begin{IEEEkeywords}
Lithium-ion, battery, charging,
monotone control systems, optimal control. 
\end{IEEEkeywords}

\section*{Erratum}
\textbf{The first version of this manuscript was submitted for publication and uploaded to Arxiv in January 2023. In the autumn of 2023 we found an error in the original Corollary I.3., which was not true as stated. The error was a genuine mistake, but unfortunately has consequences throughout the rest of the work. We have withdrawn the manuscript from peer review and are working on a corrected version. We apologise to any readers who have used the original, flawed version of this work.}

\textbf{Corollary I.3 has been corrected in the current work, and the manuscript updated accordingly.}

\section*{Introduction}
The lithium ion battery is one of the leading energy storage devices of the green energy transition.  In fact, for several emerging applications, such as electric vehicles and grid storage, it is often the performance of the battery which is the limiting factor. For this reason, there has been a growing demand for ``better batteries'', as in those with increased energy densities and lifespans, that can be manufactured cheaply, at scale, with only a minimal environmental footprint and which, crucially, can be fast charged. Of these, it is perhaps  fast charging  which is the most pressing issue holding back the widespread adoption of  electric vehicles (EVs). Whilst most EVs now allow for charging at speeds of around 20-80\% state of charge in under 30 minutes,  further advances are needed before EV fast charging can be made as quick as refilling an internal combustion engine vehicle  and can be implemented on an everyday basis without impacting the health of the battery.

Many factors have been observed to influence battery fast charging performance~\cite{tomaszewska2019lithium,wassiliadis2021review}, but, of these, one of the most critical is, simply, the profile of the applied charging current~\cite{keil2016charging}. Computing an optimal fast-charging current requires solving a constrained optimal control problem to minimise the charging time whilst ensuring the cell remains both safe and healthy. Significant gains can be delivered in this way. For example Couto {\em et al.}~\cite{couto2021faster} showed that, compared to 2C constant current-constant voltage (CC-CV) charging, an optimised charging current could reduce the charging time by 22\% whilst reducing the capacity loss by 26\%. 

{ A wide variety of schemes} for computing fast-charging currents  have been developed, including variations of  CC-CV charging~\cite{maia2019expanding} and constant-temperature effects \cite{patnaik2018closed},  pack-level considerations~\cite{ouyang2019optimal}, and the use of sinusoidal currents~\cite{vincent2017system}. By way of related literature, the papers~\cite{perez2017optimal,suthar2014optimal,romagnoli2017computationally,ECC2022} each address the fast charging problem from a model-based perspective, with constraints to maintain the cell's health using different combinations of models and optmization methods. The papers~\cite{zou2017electrochemical,zou2018model,tian2020real} are also model-based and study various model predictive control (MPC) schemes for a range of cell models. The papers\cite{chen2022data}, \cite{jiang2022fast} and~\cite{sieg2019fast} present data-driven fast charging schemes, namely via a data-driven Bayesian Optimisation approach, an experimentally-derived CC-CV type-protocol designed to limit Li-ion plating, and so-called Data-EnablEd Predictive Control (DeePC), respectively. The importance of uncertainties is discussed in~\cite{cai2022fast}. In addition to these references, we refer the reader to the reviews of fast charging~\cite{tomaszewska2019lithium,keil2016charging,wassiliadis2021review,dufek2022developing}. 

As well as electrochemical considerations, the cell temperature  has also been found to play a significant role in fast-charging performance~\cite{mohtat2021algorithmic,f2021fast}, especially as a means to limit Li-plating \cite{yang2019asymmetric}. Two of the main issues with such  model-driven studies are  their computational complexity, and  the inherent biases and limitations of models. To overcome these issues, there has been recent interest in applying data-driven methods~\cite{attia2020closed,jiang2022fast, park2022deep,park2020reinforcement}, however, it can be argued that this approach is { currently} limited by the availability of  real-world data. 

In the studies mentioned above, various solutions are presented for the fast-charging problem. However, these often lacked detailed mathematical analysis, making it difficult to discern why the computed optimal controls took the forms that they did. Such insight could be gained by applying  methods from control theory. One notable study on characterising the solutions of battery fast-charging problems is~Park {\em et al.}~\cite{park2020optimal} which showed, by solving the maximum principle equations by hand, that a form of CC-CV charging is optimal for a constrained problem involving the  Li-ion battery single particle model (SPM). In the context of this paper, the results of
~\cite{park2020optimal}  are important because they contain, to the best of our knowledge, the first methodical reasoning to explain why experimentally-derived protocols, such as CC-CV, have proven so successful in practice.

Building upon the ideas of~
\cite{park2020optimal},  the present paper generalises that analysis by presenting conditions for which the solution of optimal fast-charging problems can be explicitly defined. {  In overview}, here it is shown that provided the underlying battery model admits certain \textit{monotonicity} assumptions, then the solutions of the constrained optimal control problems necessarily ride one of the constraints at some time instant during the charge. We use this result to highlight the applicability of ``bang-and-ride control''. Informally, bang-and-ride control is a policy that is either maximal (bang), or as large as possible whilst not violating any state constraints (ride).

Underlying this result is the theory of {\em monotone control systems}. The analysis of these systems dates back to the seminal work of Angeli and Sontag~\cite{angeli2003monotone} where their potential for analysing biological systems was highlighted, in particular for detecting multi-stability. These systems generalise the powerful and well-studied concept of {\em monotone dynamical systems}, see~Hirsch and Smith~\cite{MR1319817} and Smith~\cite{MR2182759}, to a natural control setting where input and output variables are present. For a recent survey on monotone control systems we refer the reader to~Smith~\cite{smith2017monotone}.
Loosely speaking, a monotone control system is  one where any ordering of initial states and inputs is preserved by the states over time. In particular, the value of the states increases if the input increases, a structural feature which enables several powerful systems analysis techniques to be applied. To be clear, here, the term ``increases'' means with respect to a partial order determined by a so-called positive cone---such as the nonnegative orthant in real Euclidean space.

A related concept is that of so-called {\em positive dynamical systems}~\cite{MR2655815,MR1019319}, or {\em positive control systems}~\cite{MR1784150} again when inputs and outputs are present. With positive systems, their defining feature is that of invariance---meaning solutions which start in a positive cone remain in that cone over all time, reflecting the fact that state variables often correspond to necessarily nonnegative quantities, such as ion concentrations for battery applications. Positive systems are simple examples of monotone control systems that have garnered much interest in the control literature because they often admit linear Lyapunov functions, and scalable model-order reduction~\cite{kawano2019data} and feedback controller design~\cite{rantzer2015scalable} methods for them exist.

To the best of our knowledge, we are unaware of any existing papers that have directly exploited monotonicity for battery models. This omission is surprising owing to the inherent monotonicity of many of these models. Indeed, in general,  it has been observed that increasing the applied current causes the various voltages and temperatures that form the battery models' state variables to also increase, a monotonic relation. It is shown in this paper how this structural feature of battery models' monotonicity can be exploited to  characterise the solutions of fast charging optimal control problems. 
 
\section{An optimal control problem for monotone control systems}\label{sec:mon}

In this section, the optimal control problems for the monotone control systems under consideration are introduced and shown to be optimised by bang-and-ride control.

\subsection{Problem set-up} 
Consider the system of controlled nonlinear differential equations
\begin{subequations}\label{sys:oc_problem}
\begin{equation}\label{eq:model}
 \dot x(t) = f(x(t),u(t)), \quad x(0) = x_0\,,
\end{equation}
for the given function~$f : \mR^n \times \mR^m \to \mR^n$ which is assumed continuously differentiable. Here, as usual,~$x(t) \in \mR^n$ is the state variable, with initial state $x_0 \in \mR^n$, and~$u(t)\in \mR^m$ is the input variable. 

We shall assume that, for all~$\tau>0$, all~$x_0 \in \mR^n$ and all measurable, locally essentially bounded~$u$, a unique {\em solution}~$x$ of~\eqref{eq:model} exists, that is, an absolutely continuous function~$x : [0,\tau] \to \mR^n$ which satisfies~\eqref{eq:model} almost everywhere. Given such~$x_0$ and~$u$, we let~$x(t) = x(t;x_0,u)$ denote the solution of~\eqref{eq:model} at time~$t \in [0,\tau]$, and we call the pair~$(u,x)$ a {\em trajectory} of~\eqref{eq:model}. In light of the standing assumption that~$f$ is continuously differentiable, existence of unique solutions of~\eqref{eq:model} is guaranteed from known results under an additional mild boundedness assumption on $f$ \cite[Theorem 54,  Proposition C.3.8]{sontag2013mathematical}.

Given a running-cost function~$L: \mR^n \times \mR^m \to \mR$, constraint function~$h : \mR^n \times \mR^m \to \mR^s$ {  with components $h_k$}, and fixed final-time~$\tf >0$, consider the cost functional~$J$ given by
\begin{equation}\label{eq:cost}
J(x_0, u) := \int_0^{\tf} L(x(t),u(t))\, \rd t\,,
\end{equation}  
and the mixed constraint
\begin{equation}\label{eq:constraint}
h(x(t),u(t)) \geq 0\,
\end{equation} 
\end{subequations}
(understood component-wise). {  Equation~\eqref{eq:constraint} is a mathematically convenient method of capturing constraints on the state, input, or mixed constraints}. The positive integer~$s$ denotes the number of mixed constraints. A trajectory $(u,x)$ of~\eqref{eq:model} with $x$ defined on $[0,\tf]$ and which satisfies the constraint~\eqref{eq:constraint} on $[0,\tf]$ is called an {\em admissible trajectory}. We shall say that constraint $k$ is {\em engaged} if $h_k(x(t),u(t)) = 0$.

The corresponding optimal control problem is to maximise~$J$ given by~\eqref{eq:cost}, over all piecewise continuous~$u : [0,\tf] \to \mR^m$, subject to~\eqref{eq:model} and the constraint~\eqref{eq:constraint}.

\subsection{Monotonicity properties}

Monotonicity of~\eqref{sys:oc_problem} plays a key role in the present work. Here, we gather the required notation, terminology and hypotheses. 
For $x, y \in \mR^n$, with components $x_i$, $y_i$, we write
\[ \begin{aligned}
x &\leq y \quad \text{if $x_{i} \leq y_{i}$  for all $i = 1,2,\dots,n$}, \\
x &< y \quad \text{if $x \leq y$ and $x \neq y$}, \\
x &\ll y \quad \text{if $x_{i} < y_{i}$  for all $i = 1,2,\dots,n$}\,. 
\end{aligned}\]
We shall also use the symbols $\geq$, $>$ and $\gg$, defined analogously. Note that the symbol $\lll$ is also used in the literature instead of $\ll$.

As usual, we let $L^\infty_{\rm loc}(\mR_+,\mR^m)$ denote all measurable and locally essentially bounded functions $\mR_+ \to \mR^m$. For $u_1, u_2 \in L^\infty_{\rm loc}(\mR_+,\mR^m)$, we write $u_1 \leq u_2$ and $u_1 < u_2$ if, respectively,
\begin{align*}
    u_1(t) &\leq u_2(t) \quad \text{for almost all $t \geq 0$,} \\
\text{and} \quad  u_1(t) &< u_2(t) \quad \text{for almost all $t \geq 0$.}
\end{align*} 
%
%
We recall two definitions from Angeli and Sontag~\cite{angeli2003monotone} and~\cite{angeli2004multi} for monotone control systems and their excitability. 
\begin{definition}
The controlled differential equation~\eqref{eq:model} is called a {\em monotone control system} if, for all $t\geq0$, all $\xi_1, \xi_2 \in \mR^n$ and all $u_1,u_2 \in L^\infty_{\rm loc}(\mR_+,\mR^m)$, the implication 
 \begin{align*} & \xi_1 \leq \xi_2 ~ \text{and} ~ u_1 \leq u_2 
~  \Rightarrow ~ x(t; \, \xi_1, u_1) \leq x(t; \, \xi_2, u_2)\,,\end{align*}
holds ($\xi_j$ is a dummy variable for the initial state).

{  Positive linear systems,} that is,
\begin{equation}\label{eqn:lin_ss}
\dot x(t) = Ax(t) + B u(t)\,,
\end{equation}
for Metzler matrix $A$ (every off-diagonal entry nonnegative) and componentwise nonnegative matrix $B$ comprise a simple class of monotone control systems; see,~\cite[Section VIII]{angeli2003monotone}. Positive linear systems are well-studied objects and we refer the reader to, for example~\cite{MR2655815,MR1784150} for further background.

The controlled differential equation~\eqref{eq:model} is called {\em excitable} if, for  all $\xi \in \mR^n$, and all $u_1,u_2 \in L^\infty_{\rm loc}(\mR_+,\mR^m)$ with $u_1 < u_2$, it follows that 
\[ x(t; \, \xi, u_1) \ll x(t; \, \xi, u_2) \quad \forall \: t >0\,. \]
\end{definition}
The concept of excitability intuitively means that every input has an effect on every state variable, perhaps indirectly. A graphical test for excitability is given in Angeli and Sontag~\cite[Theorem 4]{angeli2004multi}.

%
%
It is convenient to formulate a number of hypotheses on the model~\eqref{eq:model}, cost functional~\eqref{eq:cost} and constraint~\eqref{eq:constraint}. 
\begin{enumerate}[label = {\bfseries (H\arabic*)}, ref = {\rm \bfseries (H\arabic*)}, start = 1,itemsep = 0.5ex]
%
%
\item \label{ls:H1_f} \eqref{eq:model} is a monotone control system.
%
%
\item \label{ls:H2_L} The running-cost function~$L$ is continuous and monotone in the sense that 
{\small\begin{align*}
   L(\xi_1, v_1) \leq L(\xi_2, v_2) \quad &\forall \: (\xi_1,v_1), (\xi_2, v_2)  \in \mR^n \times \mR^m \quad 
    \\
     & \quad \text{with} \quad (\xi_1,v_1) \leq (\xi_2, v_2)\,,
\end{align*} with $\xi_j$ being a dummy variable for the state and $v_j$ for the input. }
%
%
\item \label{ls:H3_L_strict} One of the following holds:
\begin{itemize}
    \item for all $\xi \in \mR^n$ 
    {\small \[ L(\xi, v_1) < L(\xi, v_2) \quad \forall \: v_1, v_2 \in \mR^m, \; v_1 < v_2\]}
    \item \eqref{eq:model} is excitable and for all $v \in \mR^m$
{\small
\[ L(\xi_1, v) < L(\xi_2, v) \quad \forall \: \xi_1, \xi_2 \in \mR^n, \; \xi_1 \ll \xi_2\,.\]
}
\end{itemize}
%
%
\item \label{ls:H4_h} The constraint function~$h$ is continuous, and every  component $h_k$ is non-increasing in each input component $u_j$. 
\end{enumerate}

The functions $f$, $L$ and $h$ do not need to be defined on all of $\mR^n \times \mR^m$. They could instead be defined on some subset $X \times U$ of $\mR^n \times \mR^m$ (with minor technical assumptions, as described in~Angeli and Sontag~\cite{angeli2003monotone}), and have their continuity and monotonicity properties there. The current choice of $X =\mR^n$ and $U = \mR^m$ is used  to reduce the volume of notation and assumptions introduced.

%
%
Since $f$ in~\eqref{eq:model} is assumed continuously differentiable, the result of  Angeli and  Sontag~\cite[Proposition III.2]{angeli2003monotone} yields that a necessary and sufficient condition for~\eqref{eq:model} to be a monotone control system is
\[ \pdiff{f_j}{\xi_i}(\xi,v) \geq 0 \quad \forall \: \xi \in \mR^n, \; \forall \: v \in \mR^m, \; \forall \: i \neq j\,,\]
and
\[ \pdiff{f_j}{v_i}(\xi,v) \geq 0 \quad \forall \: \xi \in \mR^n, \; \forall \: v \in \mR^m, \; \forall \: i, j\,.\]
%
%
%
Hypotheses~\ref{ls:H1_f} and~\ref{ls:H2_L} capture monotonicity of the control system~\eqref{eq:model} and cost-functional~$J$ in~\eqref{eq:cost}, respectively. 
It is intuitively clear that, under these two assumptions, the cost functional is maximised by making~$u$, and hence~$x$, as large as possible (componentwise). 
However, assumption~\ref{ls:H4_h} acts to bound~$u$, as it captures that~$h$ is non-increasing in~$u$, so that in practice arbitrarily large inputs are not admissible.

%
%
However, note that assumptions~\ref{ls:H1_f} and~\ref{ls:H2_L} still allow for somewhat degenerate situations, such as $f = 0$ and $L=0$, where the cost is always equal to zero, so zero is the optimal cost. Since every piecewise continuous input is admissible, and also optimal, it follows that optimisers are not unique. 
Therefore, hypothesis~\ref{ls:H3_L_strict} seeks to address this degeneracy by enforcing ``strict'' monotonicity, either entirely in the running-cost function $L$, or jointly across the running-cost and dynamics for $x$ via the excitability condition. The two displayed inequalities in~\ref{ls:H3_L_strict} collapse to the usual notion of strictly increasing for functions of a scalar variable, for fixed other variable. However, observe that in the general multivariable case ($n,m >1$) there is a deliberate asymmetry between these inequalities. {  Roughly speaking}, for the applications we shall consider, not every component of $\xi \in \mR^n$, representing the state variable, need appear in $L$, which may lead to $L(\xi_1,v) = L(\xi_2,v)$ for certain $\xi_1, \xi_2 \in \mR^n$ even though $\xi_1< \xi_2$. Consequently, the requirement that $L(\xi_1,v) < L(\xi_2,v)$ only when $\xi_1< \xi_2$ places too restrictive a constraint on $L$ for our purposes, and has been replaced by the second part of~\ref{ls:H3_L_strict}. 

%
%
The condition~\ref{ls:H4_h} prevents lower bounds for the input variable(s), such as
\begin{equation}\label{eq:h_lower_bound}
    \underline{u} \leq u(t) \quad \forall \: t \in [0, \tf]\,,
\end{equation} 
appearing in the mixed-constraint~\eqref{eq:constraint}, for some given $\underline{u} \in \mR^m$, even though such lower bounds are common in practical applications. Indeed, an assumption of the form~\eqref{eq:h_lower_bound} would be encoded in the mixed constraint $h$ via
\[ h_\mathcal{I}(\xi, v) = v - \underline{u} \quad \forall \: \xi \in \mR^n, \; \forall \: v \in \mR^m\,, \]
(some collection of components $\mathcal{I}$), which is an increasing function of $v$.
The reason for the present formulation of~\ref{ls:H4_h} is, {  roughly speaking}, 
that lower bounds are not engaged 
by optimal controls. The following analysis is made easier by omitting these types of constraints.

The next theorem is the main result of this section.

\begin{theorem}\label{thm:oc_monotone}
Fix $x_0 \in \mR^n$ and $\tf >0$, and consider the cost functional $J$ given by~\eqref{eq:cost} subject to~\eqref{eq:model}. 
Let $u_1, u_2 \in L^\infty_{\rm loc}(\mR_+,\mR^m)$ and assume that~\ref{ls:H1_f} and~\ref{ls:H2_L} hold. {  Then} 
\begin{enumerate}[label = {\rm (\arabic*)}]
    \item \label{ls:monotone_cost_1} The function $v \mapsto J(x_0,v)$ is non-decreasing, in the sense that 
\begin{align*} J(x_0, u_1) \leq J(x_0, u_2) \quad  
\text{ when } \quad u_1\leq u_2.
\end{align*}
    \item \label{ls:monotone_cost_2} If, additionally, hypothesis~\ref{ls:H3_L_strict} holds and $u_1, u_2$ are continuous at some $t_* \in [0, \tf]$ with $u_1(t_*) < u_2(t_*)$, then~$J(x_0, u_1) < J(x_0, u_2)$.
\end{enumerate}
\end{theorem}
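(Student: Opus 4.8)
The plan is to prove the two assertions in turn; part~\ref{ls:monotone_cost_1} is immediate, and essentially all the work is in the excitable case of part~\ref{ls:monotone_cost_2}. For part~\ref{ls:monotone_cost_1}, assume $u_1 \leq u_2$ and write $x_i(t) := x(t;x_0,u_i)$. Hypothesis~\ref{ls:H1_f} gives $x_1(t) \leq x_2(t)$ for all $t \geq 0$, hence $(x_1(t),u_1(t)) \leq (x_2(t),u_2(t))$ for almost every $t$; then~\ref{ls:H2_L} gives $L(x_1(t),u_1(t)) \leq L(x_2(t),u_2(t))$ a.e.\ on $[0,\tf]$, and since $L$ is continuous, each $x_i$ is continuous on the compact interval $[0,\tf]$ and each $u_i$ is essentially bounded there, the two integrands are essentially bounded and integration over $[0,\tf]$ yields $J(x_0,u_1) \leq J(x_0,u_2)$. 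For part~\ref{ls:monotone_cost_2} we continue to assume $u_1 \leq u_2$. Choosing a component $j$ with $(u_1(t_*))_j < (u_2(t_*))_j$ and using continuity of these components at $t_*$ produces a nondegenerate interval $[a,b] \subseteq [0,\tf]$ containing $t_*$ on which $(u_1)_j < (u_2)_j$ pointwise, hence $u_1(t) < u_2(t)$ in the order of $\mR^m$ for almost every $t \in [a,b]$. Since, by part~\ref{ls:monotone_cost_1}, $L(x_1(t),u_1(t)) \leq L(x_2(t),u_2(t))$ a.e.\ on $[0,\tf]$, it will suffice to exhibit a subset of $[a,b]$ of positive measure on which this inequality is strict. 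If the first alternative of~\ref{ls:H3_L_strict} holds, then for a.e.\ $t \in [a,b]$ we have $L(x_1(t),u_1(t)) < L(x_1(t),u_2(t)) \leq L(x_2(t),u_2(t))$, the first inequality by~\ref{ls:H3_L_strict} at state $x_1(t)$ (with inputs $u_1(t) < u_2(t)$) and the second by~\ref{ls:H2_L} (with $x_1(t) \leq x_2(t)$), which settles this case.

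The remaining, and hardest, case is the second alternative of~\ref{ls:H3_L_strict}, in which~\eqref{eq:model} is excitable and $L$ is strictly increasing in the state. Here the target is $x_1(t) \ll x_2(t)$ for $t \in (a,b]$. The difficulty is that $u_1$ and $u_2$ may coincide outside $[a,b]$, so excitability---which requires $u_1 < u_2$ to hold almost everywhere on all of $\mR_+$---cannot be applied to the pair $(u_1,u_2)$ directly. I would handle this by passing to the shifted system: by uniqueness of solutions and time-invariance of~\eqref{eq:model}, $x_i(a+s) = x\big(s;\, x_i(a),\, u_i(a+\cdot)\big)$ for $s \geq 0$, and $x_1(a) \leq x_2(a)$ by~\ref{ls:H1_f}. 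Let $v \in L^\infty_{\rm loc}(\mR_+,\mR^m)$ agree with $u_2(a+\cdot)$ on $[0,b-a]$ and with the function $s \mapsto u_1(a+s) + (1,\dots,1)$ for $s > b-a$, so that $u_1(a+\cdot) < v$ holds pointwise almost everywhere on $\mR_+$; excitability then gives $x(s;x_1(a),u_1(a+\cdot)) \ll x(s;x_1(a),v)$ for all $s > 0$, while uniqueness gives $x(s;x_1(a),v) = x(s;x_1(a),u_2(a+\cdot))$ for $s \in [0,b-a]$. Combining this with $x(s;x_1(a),u_2(a+\cdot)) \leq x(s;x_2(a),u_2(a+\cdot)) = x_2(a+s)$ (from~\ref{ls:H1_f} and $x_1(a) \leq x_2(a)$) gives $x_1(a+s) \ll x_2(a+s)$ for $s \in (0,b-a]$, i.e.\ $x_1 \ll x_2$ on $(a,b]$. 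Then, for a.e.\ $t \in (a,b]$, $L(x_1(t),u_1(t)) < L(x_2(t),u_1(t)) \leq L(x_2(t),u_2(t))$---the first inequality from the second alternative of~\ref{ls:H3_L_strict} at input $u_1(t)$ (using $x_1(t) \ll x_2(t)$), the second from~\ref{ls:H2_L} (using $u_1(t) \leq u_2(t)$)---and integrating over $[0,\tf]$ with $(a,b]$ of positive measure completes the proof.

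I expect the sole genuine obstacle to be this excitable case: legitimately invoking excitability starting only from strictness of the inputs on the subinterval $[a,b]$, which forces the causality/time-invariance manoeuvre and the auxiliary input $v$, together with the use of the intermediate trajectory $x(\cdot\,;x_1(a),u_2(a+\cdot))$---started from the smaller state with the larger input---to absorb the mismatch $x_1(a) \leq x_2(a)$ at time $a$. All the other steps are routine consequences of the monotonicity hypotheses~\ref{ls:H1_f}--\ref{ls:H3_L_strict} and the elementary fact that an a.e.-nonnegative integrable function that is strictly positive on a set of positive measure has strictly positive integral.
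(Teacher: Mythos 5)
Your proof is correct and follows the same overall strategy as the paper's: monotonicity of the flow plus \ref{ls:H2_L} gives the non-strict pointwise inequality of integrands, continuity at $t_*$ upgrades the single strict inequality of inputs to strictness on a whole subinterval, and each alternative of \ref{ls:H3_L_strict} then yields strictness of the integrand there. Where you genuinely improve on the paper is the excitable case: the paper simply asserts that excitability gives $x(t;x(t_1,\xi,u_1),u_1) \ll x(t;x(t_1,\xi,u_2),u_2)$ on $(t_1,t_2]$, even though the definition of excitability requires a common initial state and $u_1 < u_2$ almost everywhere on all of $\mR_+$, neither of which holds here; your time-shift, the auxiliary input $v$ that strictly dominates $u_1(a+\cdot)$ globally while agreeing with $u_2(a+\cdot)$ on $[0,b-a]$, and the intermediate trajectory $x(\cdot\,;x_1(a),u_2(a+\cdot))$ (small initial state, large input) supply exactly the missing justification. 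A further minor difference is that you deduce strictness of the integral from the fact that a nonnegative measurable integrand which is positive a.e.\ on a set of positive measure has positive integral, whereas the paper extracts a uniform $\eps_1$-gap on the subinterval via continuity of $L$; both routes are valid, and yours avoids the compactness/uniformity step.
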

Strictly speaking, elements of $L^\infty_{\rm loc}(\mR_+)$ are equivalence classes of functions, and do not necessarily have well-defined point values. Thus, in statement~\ref{ls:monotone_cost_2}, we mean that $u_1$ and $u_2$ have representatives on a neighbourhood of $t_*$ which are continuous at $t_*$. This will always be the case if, for example, $u_1$ and $u_2$ are themselves piecewise continuous functions which are continuous at $t_*$.

{  Statement~\ref{ls:monotone_cost_2} of Theorem~\ref{thm:oc_monotone} is one way of making precise and ensuring the desired implication
\[ ``u_1 < u_2 \quad \Rightarrow \quad J(x_0, u_1) < J(x_0,u_2)\text{''} \]
the essential challenge being that a single pointwise inequality $u_1(t) < u_2(t)$ need not ensure the inequality of integrals $J(x_0, u_1) < J(x_0,u_2)$. Our approach is to use continuity of $u_1$ and $u_2$.}

\begin{proof}[Proof of Theorem~\ref{thm:oc_monotone}]
For the proof of both statements, fix~$x_0 \in \mR^n$, and let~$u_1, u_2 \in L^\infty_{\rm loc}(\mR_+,\mR^m)$ be such that~$u_1 \leq u_2$.

\ref{ls:monotone_cost_1}  From the monotonicity of~\eqref{eq:model}, it follows that
\[ x(t; x_0, u_1) \leq x(t;x_0,u_2) \quad \text{for all $t \in [0, \tf]$}\,.\]
The monotonicity assumption~\ref{ls:H2_L} on $L$ now gives
\begin{equation}\label{eq:L_monotone_1}
    L(x(t; x_0, u_1), u_1(t)) \leq L(x(t; x_0, u_2), u_2(t))
\end{equation} 
for almost all $t \in [0, \tf]$. 
Integrating both sides of~\eqref{eq:L_monotone_1} over $[0,\tf]$ yields the desired inequality.

\ref{ls:monotone_cost_2} The hypotheses of statement~\ref{ls:monotone_cost_1} hold, so that in particular the inequality~\eqref{eq:L_monotone_1} holds. 

By the assumed continuity of $u_1$ and $u_2$ at $t_*$, there exist {  $\delta >0$} and $\eps_0 \in \mR^n$ with $\eps_0 >0$ and such that
\begin{equation}\label{eq:u_inequality}
 u_1(t) \,{  <}\, u_2(t) + \eps_0 \quad \forall \: t \in [t_1 , t_2]\,,
\end{equation}
where $t_1 := t_* - \delta$ and $t_2 : = t_* + \delta$. (We adjust this interval accordingly if $t_* =0$ or $t_* = \tf$.)

Assume that the first item in hypothesis~\ref{ls:H3_L_strict} holds, {  briefly}, that $L$ is strictly increasing in its second variable.  By continuity of $L$ and the inequality~\eqref{eq:u_inequality}, there exists $\eps_1 >0$ such that
{\small
\begin{align}
    \hspace{-1ex}L(x(t; x_0, u_1), u_1(t)) & \leq L(x(t; x_0, u_1), u_2(t)) + \eps_1 \notag \\
    & \leq L(x(t; x_0, u_2), u_2(t)) + \eps_1 \,, \label{eq:L_inequality}
\end{align}}
for all $t \in  [t_1, t_2]$. Here we have used~\ref{ls:H2_L} to obtain the second inequality.  Integrating the above over $t \in [0,\tf]$, combined with~\eqref{eq:L_monotone_1}, yields that $J(x_0, u_1) < J(x_0, u_2)$, as required. 

Now assume, instead, that the second item in hypothesis~\ref{ls:H3_L_strict} holds, namely that~\eqref{eq:model} is excitable and, {  (thereabouts)} $L$ is strictly increasing in its first variable. The inequality~\eqref{eq:u_inequality} may be expressed as $u_1 < u_2$ on $[t_1,t_2]$, and excitability yields that
\begin{align*}
    x(t; x(t_1,\xi,u_1), u_1) \ll x(t; \, &x(t_1,\xi,u_2), u_2) \\ 
    & \forall \: t \in (t_1, t_2]\,.
\end{align*}
Invoking the current hypothesis on $L$, the inequality~\eqref{eq:L_inequality} again holds, now on $(t_1, t_2]$. Integrating over $[0,\tf]$ as before gives $J(x_0, u_1) < J(x_0, u_2)$, completing the proof.
\end{proof}

%
%
A consequence of the above theorem is the following necessary condition for an optimal control.

\begin{corollary}\label{cor:necessary}
Consider the optimal control problem~\eqref{sys:oc_problem} and assume that hypotheses~\ref{ls:H1_f}--\ref{ls:H4_h} hold. Fix $x_0 \in \mR^n$ and let $(u,x)$ denote an admissible trajectory  with piecewise continuous $u$. If there exists $t_0 < \tf$ such that
\begin{equation}\label{eq:h_strongly_positive}
    h(x(t),u(t)) \gg 0 \quad \forall \: t \in [t_0, \tf]\,,
\end{equation} 
then $(u,x)$ is not optimal.
\end{corollary}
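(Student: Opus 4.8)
\textit{Proof proposal.} The plan is to exploit the ``slack'' guaranteed by~\eqref{eq:h_strongly_positive}: since every component of $h(x(t),u(t))$ is strictly positive on the \emph{compact} interval $[t_0,\tf]$, one may increase the input there by a small constant vector without violating any constraint, and by Theorem~\ref{thm:oc_monotone} such an increase strictly raises the cost. This yields an admissible trajectory of strictly larger cost, so $(u,x)$ is not optimal.

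In detail, I would fix the all-ones vector $\mathbf{1}_m:=(1,\dots,1)\in\mR^m$ and, for a parameter $\eta>0$, define $u_\eta\in L^\infty_{\rm loc}(\mR_+,\mR^m)$ by $u_\eta(t):=u(t)$ for $t\in[0,t_0)$ and $u_\eta(t):=u(t)+\eta\,\mathbf{1}_m$ for $t\geq t_0$. Then $u_\eta$ is piecewise continuous on $[0,\tf]$ (at most one additional jump, at $t_0$), $u\leq u_\eta$ on $\mR_+$, and $u\ll u_\eta$ on $[t_0,\infty)$. By the standing existence hypothesis the solution $x_\eta:=x(\,\cdot\,;x_0,u_\eta)$ is defined on $[0,\tf]$; it agrees with $x$ on $[0,t_0]$, and by monotonicity~\ref{ls:H1_f} one has $x_\eta(t)\geq x(t)$ for $t\in[t_0,\tf]$.

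Next I would check that $(u_\eta,x_\eta)$ is admissible for all sufficiently small $\eta>0$. On $[0,t_0)$ nothing changes, so $h(x_\eta(t),u_\eta(t))=h(x(t),u(t))\geq0$ there. On $[t_0,\tf]$, continuity of $t\mapsto h(x(t),u(t))$ together with compactness gives a constant $c>0$ with $h_k(x(t),u(t))\geq c$ for all $t\in[t_0,\tf]$ and all $k$. Continuous dependence of solutions of~\eqref{eq:model} on the input (a standard consequence of $f\in C^1$ and boundedness of $x$ on $[0,\tf]$, via Gronwall's inequality) yields $\sup_{t\in[t_0,\tf]}|x_\eta(t)-x(t)|\tendo$ as $\eta\tendo$; since also $\sup_{t\in[t_0,\tf]}|u_\eta(t)-u(t)|=\eta\sqrt{m}\tendo$ and $h$ is (uniformly, on a fixed compact set) continuous, $h(x_\eta(t),u_\eta(t))\to h(x(t),u(t))$ uniformly on $[t_0,\tf]$. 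Hence for $\eta$ small enough $h_k(x_\eta(t),u_\eta(t))\geq c/2>0$ on $[t_0,\tf]$ for all $k$, so $(u_\eta,x_\eta)$ is admissible.

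Finally, fix such a small $\eta>0$ and apply statement~\ref{ls:monotone_cost_2} of Theorem~\ref{thm:oc_monotone} with $u_1=u$ and $u_2=u_\eta$: hypotheses~\ref{ls:H1_f}--\ref{ls:H3_L_strict} hold, $u\leq u_\eta$, and since $u$ is piecewise continuous the nonempty open interval $(t_0,\tf)$ contains a point $t_*$ at which $u$ — and hence $u_\eta=u+\eta\,\mathbf{1}_m$ — is continuous, with $u(t_*)<u_\eta(t_*)$. The theorem then gives $J(x_0,u)<J(x_0,u_\eta)$, so $(u,x)$ is outperformed by the admissible $(u_\eta,x_\eta)$ and is not optimal. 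The only step that is not routine is the admissibility check: it requires quantifying, uniformly on $[t_0,\tf]$, the drift of $x_\eta$ (and thus of $h(x_\eta,u_\eta)$) as $\eta\tendo$, which is exactly where continuous dependence on the input and the uniform margin $c$ coming from compactness are needed; note that $h$ is not assumed monotone in the state, so the inequality $x\leq x_\eta$ alone does not suffice.
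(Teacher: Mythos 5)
Your proposal is correct and follows essentially the same route as the paper's own proof: perturb the control upward by a small amount on the region where the constraint has slack, use continuity of $h$ together with continuous dependence of the state on the input to preserve admissibility, and invoke statement~\ref{ls:monotone_cost_2} of Theorem~\ref{thm:oc_monotone} at a continuity point of $u$ to get a strict cost increase. Your write-up is in fact more explicit than the paper's (which leaves the perturbation $u_*$ and the admissibility check largely implicit), and your closing observation that $h$ need not be monotone in the state---so that continuous dependence, not the ordering $x \leq x_\eta$, is what secures admissibility---is exactly the right point to emphasise.
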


\begin{proof}[Proof of Corollary~\ref{cor:necessary}]
Let~$(u,x)$ denote an admissible trajectory of~\eqref{eq:model} with piecewise continuous $u$, and suppose that~\eqref{eq:h_strongly_positive} holds. By continuity of $h$, and the continuous dependence of $x$ on $u$, we may choose $t_1 \in (t_0,\tf)$ which is a point of continuity of $u$, and choose (sufficiently small) piecewise continuous $u_*$ such that: 
  $(u+u_*,x_*)$ is an admissible trajectory of~\eqref{eq:model}, for some corresponding state $x_*$ with $x_*(0) = x_0$, and;
   $u_*$ is continuous at $t$ with $u_*(t) >0$.
An application of statement~\ref{ls:monotone_cost_2} of Theorem~\ref{thm:oc_monotone} yields that $J(x_0, u) < J(x_0, u + u_*)$, and hence~$(u,x)$ is not optimal.
\end{proof}

\begin{remark}
In the original arxiv submission of this article, the incorrect version of the above result was erroneously used to indicate the optimality of bang-and-ride control. We apologise for any inconvenience this may have caused. 
\end{remark}

Some additional remarks are in order. The contrapositive of the above statement is that, if an admissible trajectory $(u,x)$ of~\eqref{eq:model} with piecewise continuous~$u$ is optimal, then for every $t_0 \in (0,\tf)$, there is some $t \in [t_0, \tf]$ such that
\begin{equation} \label{eq:optimal}
\left.\begin{aligned}
&h(x(t),u(t)) \geq 0 \\
~\text{and} ~ &h(x(t),u(t)) \not \gg 0 \end{aligned} \right\}\,.
\end{equation}
Condition~\eqref{eq:optimal} does not, by itself, imply that an optimal trajectory satisfies~\eqref{eq:optimal} at {\em every} $t \in [0, \tf]$. It is possible that an optimal trajectory satisfies $h(x(t),u(t))\gg0$ on some proper sub-interval $(t_1,t_2)$ of $[0,\tf]$. As such, Corollary~\ref{cor:necessary} does not yet provide a constructive method of identifying optimal controls. Indeed, what appears to happen in numerical simulations is that a numerically-computed optimal trajectory does not engage any constraint on short intervals between switches of which constraint is engaged.

\begin{remark}
It is noted that a similar ``switching''-type fast charging protocol to the above {  was} recently proposed by Berliner {\em et al.} in  \cite{berlinerfast} and \cite{berliner2022novel}, where the fast charging protocol switched between different ``operating modes'' related to actions such as constant power/temperature charging or enforcing a zero overpotential drop.  Such switching  policies can be understood as instances of ``bang-and-ride'' control (the different operating modes correspond to different constraints being ridden), so Corollary \ref{cor:necessary} can be used to infer why these protocols performed well in~\cite{berlinerfast}. 
\end{remark}

\section{Monotonicity of battery fast-charging problems}\label{sec:mon_batt}

Here, we show that several battery fast-charging problems satisfy the monotonicity assumptions~\ref{ls:H1_f}--\ref{ls:H4_h}.  

\begin{remark}
In the following, the notation that a positive current charges a cell is adopted. This notation contrasts with some of the existing literature, such as~Couto {\em et al.}~\cite{couto2021faster}, where a negative current charges the cell.  This choice of notation does not impact the results and is adopted here to make clear the connection between battery models and monotone control systems. 
\end{remark}

\subsection{Equivalent circuit model}\label{sec:ecm}

Perhaps the most popular form of battery model is the electrical equivalent circuit model of the form shown in  Figure~\ref{fig:ECM} with dynamics given by~\eqref{eqn:lin_ss} where $A := -\text{diag}(0,1/(R_1C_1),\, \dots,\,1/(R_{n-1}C_{n-1}))$, $B := [1/Q,\,1/C_1,\, \dots,\, 1/C_{n-1}]^{\top}$, $C_k$ and $R_k$ are, respectively, the capacitances and resistances of the RC-pairs of Figure~\ref{fig:ECM} and $Q$ is the cell capacity. {  It is remarked that other forms of equivalent circuit models, such as the commonly considered RC ladder networks, are monotone and so can also be studied using the results of this paper. } 

 \begin{figure}[h!]
 \centering
\includegraphics[width=0.40\textwidth]{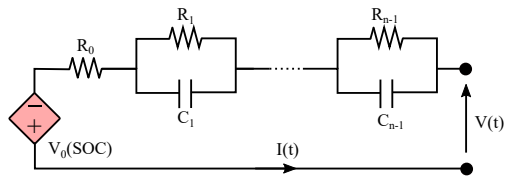}
\caption{{  Circuit diagram of} an equivalent circuit model of a lithium ion battery.}
\label{fig:ECM}
\end{figure}

The cell voltage is defined as
\begin{align}
    v(t) := U(x_1(t)) + \sum_{j= 2}^{n}x_{j} + R_0u(t)\,,
\end{align}
where $R_0$ is the series resistance and $U: \mathbb{R}\to \mathbb{R}$ is the open circuit voltage which is a nonlinear, but generally monotone, function of the state-of-charge, in this case being $x_1(t)$ which is the integral of the applied current $u(t)$ normalised by the cell capacitance.

Bounds on the applied current, the state-of-charge, the voltage drops across each RC-pair and the cell voltage are assumed, as in, for $k = 1,2,\dots, n$
\begin{align}
    u(t) \leq \bar{u}, \quad v(t) \leq \bar{v}, \quad x_k(t) \leq \bar{x}\,. 
\end{align}
{   As an aside, we note that whilst such voltage and temperature limits are examples of common charging constraints, their inclusion is not necessary for the results of the paper. Instead, the focus is on the general class of constraints which satisfy~\ref{ls:H4_h}. }For the running cost, the goal is to maximise the state-of-charge at the end time $t_f$, and so
\begin{align}\label{eqn: L_1}
    L(x(t),u(t)) = \dot{x}_1(t) = \frac{u(t)}{Q}\,.
\end{align}
{  Besides this cost, many other common running costs for fast charging problems are monotone, such as the penalisation of the charge itself and the voltage.} Each of the terms in~\eqref{eqn:lin_ss}-\eqref{eqn: L_1} satisfies the monotonicity assumptions~\ref{ls:H1_f}--\ref{ls:H4_h}.

\subsection{Reduced-order single particle model}

In~\cite{park2020optimal}, it was shown that, by solving the maximum principle equations by hand, a form of CC-CV charging is  optimal for a fast-charging problem involving a reduced-order version of the single-particle model (SPM---one of the standard simplified electrochemical models of a Li-ion battery). For each electrode, this model was obtained from a 3$^{rd}$-order Pad\'{e} approximation of the SPM's transfer function, giving a state-space realisation {  of the form~\eqref{eqn:lin_ss} with $n=3$,
\begin{equation}\label{SPM_park}
A : =\begin{bmatrix} a_1 & 0 & 0 \\ 0 & a_2 & 0 \\ 0 & 0 & 0  \end{bmatrix}, \quad B : = \begin{bmatrix} -b_1 \\ b_2 \\ b_3  \end{bmatrix}
\end{equation}
with} the parameters $a_k \leq 0,$ $b_k~\geq ~0$  depending upon the specific chemistry (values for LCO, NCA and  based cells are stated in~Park {\em et al.}~\cite[Table 1]{park2020optimal}). Furthermore, the fast-charging program considered in~Park {\em et al.}~\cite{park2020optimal} specifies
\begin{align}\label{eqn:other_park}
    L(x(t),u(t)) = x_3(t), ~ u(t) \leq \bar{u}, ~  y(t) \leq \bar{y}\,.
\end{align}
In the above, $y(t)$ is the lithium ion concentration on the surface of the anode's active particle and was defined in~Park {\em et al.}~\cite{park2020optimal} as
\begin{align}\label{eqn:c_s}
    y(t) =
    \begin{bmatrix} -c_1 & c_2 & c_3  \end{bmatrix}
    \begin{bmatrix} {x}_1(t) \\ {x}_2(t) \\ {x}_3(t)  \end{bmatrix}\,,
\end{align}
with the non-negative parameters $c_k \geq 0$ also depending upon the cell chemistry~\cite[Table 1]{park2020optimal}. 

As is, this model does not satisfy the monotone assumption~\ref{ls:H1_f}, owing to the $-b_1$ and $-c_1$ terms in~\eqref{SPM_park} and~\eqref{eqn:c_s}, but it does so after applying the state-space transformation 
\begin{align}\label{eqn:x_trans_park}
    x \mapsto \begin{bmatrix} -1 & 0 & 0 \\ 0 & 1 & 0 \\ 0 & 0 & 1 \end{bmatrix}x\,.
\end{align}
With the above transformation, the system is excitable and hypothesis~\ref{ls:H1_f} is satisfied.

\subsection{Discretised SPM}
The optimality of bang-and-ride control for the SPM  explored in~Park {\em et al.}~\cite{park2020optimal} can be generalised to the case when the model dynamics are spatial discretisations of the underlying diffusion problem instead of being reduced-order approximations of them. Using the superscript $\pm$ to differentiate between the anode and cathode, the SPM's partial differential equations relate to a spatially transformed function of the lithium ion concentrations in the active particles $c^\pm(r,t)$ (varying in both space $r$ and time $t$) diffusing along a particle radius $r \in [0, R^\pm_s(t)]$. {  After applying a spatial transformation on the SPM's spherical diffusion equation, this model's dynamics satisfies}
\begin{align*}
\frac{\partial c^{\pm}(r,t)}{\partial t} = D_s^{\pm} \frac{\partial^2 c^{\pm}(r,t)}{\partial r^2}.
\end{align*}
Here,  $D_s^{\pm}$ is the diffusion coefficient of the active  particles of radius $R_s^\pm$. The boundary conditions of the SPM are $c^{\pm}(0,t) = 0$ and
\begin{align*}
\frac{1}{R_s^\pm}\frac{\partial c^{\pm}(r,t)}{\partial r}\Big|_{r = R_s^\pm}-\frac{c^{\pm}(R_s,t)}{{R_s^\pm}^2} = {\pm}\frac{u(t)}{D_s^{\pm}Fa^{\pm}AL^\pm},
\end{align*}
 where $F$ is Faraday's constant, $a^\pm$ is the active particle surface area, $A$ is the current collector surface area and $L^\pm$ is the electrode thickness. Applying a finite difference discretisation to the SPM's diffusion equation with the boundary conditions on a spatial domain composed of  $n+2$ equally spaced grid points separated $\Delta^{\pm} =  R_s^{\pm}/(n+1)$ apart gives
 {\small
 \begin{align*}
    \frac{\partial^2 c(r,t)}{\partial r^2}  & \approx \frac{c(r+\Delta^{\pm},t)-2c(r,t)+c(r-\Delta^{\pm},t)}{(\Delta^{\pm})^2}\,,
    \\
    \frac{\partial c(r,t)}{\partial r}\Big|_{r = R_s}  & \approx
    \frac{c(R_s^{\pm},t)-c(R_s^{\pm}-\Delta^{\pm},t)}{\Delta^{\pm}}\,.
 \end{align*}}

 Using the state-space $x_{k}(t) = c(k\Delta^{\pm},t)  $ for each electrode with $k = 1,\, \dots,\, n$, the resulting discretised SPM dynamics have the structure of~\eqref{eqn:lin_ss} with
\begin{align*}
A&=
\frac{D_s^{\pm}}{(\Delta^{\pm}) ^2}\begin{bmatrix}-2 &1 &  0 &   \\ 1& -2 &1 &  \ddots  
\\
 0 &  \ddots  & \ddots & \ddots &  0  \\    & \ddots &  1 &  -2 & 1  \\ &   & 0 & 1& -2+\frac{n+1}{n} \end{bmatrix}
, 
\\ 
B  & =  \begin{bmatrix}0, & 0, & \dots & 0, & \frac{\pm (n+1)^2}{nFa^\pm AL^\pm}\end{bmatrix}^\top,
\end{align*}
and, following~Park {\em et al.}~\cite{park2020optimal}, outputs the surface concentrations 
\begin{align*}
    y(t)= c(R_s,t) &= \frac{(n+1)}{n}x_{n}(t) \pm\frac{{R_s}^2u(t)}{nD_s^{\pm}Fa^{\pm}AL^\pm}
\end{align*}
Using the notation of a positive current in the cathode and a negative current in the anode, these discretised SPM dynamics can be seen to also satisfy the monotone and excitable assumptions. 

\begin{remark}
The above argument was based upon a finite difference discretisation of the SPM diffusion dynamics. We note that other spatial discretisation schemes, such as spectral collocation, may not preserve the monotonicity in~\ref{ls:H1_f}. Although, even then, state transformations such as~\eqref{eqn:x_trans_park} may exist which recover monotonicity. Moreover, it may be possible to further extend this monotonicity argument to more complex electrochemical battery models, such as the SPMe~\cite{moura2016battery} and the DFN models~\cite{doyle1993modeling,drummond2019feedback}, since they are also diffusion-driven. 
\end{remark}

\subsection{Thermal model}

The thermal response of a battery has been observed to play a significant role in battery fast charging, as it can impact cell degradation and safety~\cite{perez2017optimal}. Crucially, several common battery thermal models are also monotone. For example, consider the lumped thermal model~\cite{hu2019comparative} of the form
\begin{align}\label{sec:thermal}
    mC_p\dot{T}(t) = -\frac{T(t)}{R_T}+ u(t)\bigg(\sum_{j= 2}^{n}x_{j} + R_0u(t)\bigg)\,,
\end{align}
 where $m$ is the cell mass, $C_p$ is the specific heat capacity, ${T}(t) $ is the difference between the cell and room temperatures, $R_T$ is the cell's thermal resistance and where the electrical states $x_{2:n}(t)$ follow the equivalent circuit model of Section~\ref{sec:ecm}. To avoid overheating,  the cell's temperature can be constrained by
\begin{align}\label{eqn:T_lim}
    T(t) \leq \bar{T}\,,
\end{align}
where $\bar{T} >0$ is fixed. In this case, both~\ref{ls:H1_f} and~\ref{ls:H4_h} are satisfied when the current is non-negative $u(t) \geq 0$. It is clear that the state variable $T$ is ``excited'' by $u$, and hence the hypotheses~\ref{ls:H2_L} and~\ref{ls:H3_L_strict} can be checked as before, depending on~$L$.

\subsection{Li-plating constraint}

A key consideration in optimised fast-charging protocols is navigating the trade-off between cell charging time and degradation.  In fast charging,  Li-plating  has been identified as a critical degradation mechanism, because not only can it dominate cell ageing but it can also cause internal short circuits that increase the chance of a cell igniting. 

One approach to avoid Li-plating during fast charging is to add an additional constraint into the optimisation problem, as in \eqref{eq:constraint}, to ensure that the local overpotential drop across the  solid-electrolyte interface in the graphite anode always remains non-negative, as used by  Romagnoli {\em et al.}~\cite{romagnoli2019feedback} for example. In general, simulating the local overpotential drop in the anode requires a detailed {  electrochemical} model, such as the DFN model, but such models are, right now, too complex to be analysed in terms of the monotone systems of interest to this paper, as they involve a nonlinear set of partial differential algebraic equations connected across multiple domains. To avoid this, here the method of Romagnoli {\em et al.}~\cite{romagnoli2019feedback} is considered for the Li-plating constraint---a DFN model was used to generate a two dimensional look-up table to characterise when plating occurs as a function of the applied current density and the critical surface concentration of the graphite particles (one of the model states). 

\begin{figure}[h!]
         \centering
         \includegraphics[width=0.3\textwidth]{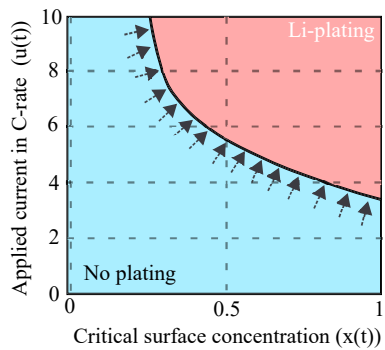}
         \caption{The lithium plating constraint of~\cite{romagnoli2019feedback} determined from the current $u(t)$ and active particle critical surface concentration $x(t)$.} 
         \label{fig:plating}
     \end{figure}

The resulting Li-plating look-up table may then be expressed graphically, as shown in Figure~\ref{fig:plating}. Here, the blue region is admissible, the red region is inadmissible, the arrows indicate the direction that is normal to the boundary of the constraint, which is always positive/non-negative.  Whilst no explicit representation of the form $h(x,u) \geq 0$ was given in~\cite{romagnoli2019feedback} for this plating constraint, Figure~\ref{fig:plating} indicates that it satisfies~\ref{ls:H4_h} since at no point along the constraint boundary (indicated by the black line) can either $x(t)$ or $u(t)$ be increased without the constraint being violated. 

\begin{figure*}
     \centering
     \begin{subfigure}[b]{0.24\textwidth}
         \centering
         \includegraphics[width=\textwidth]{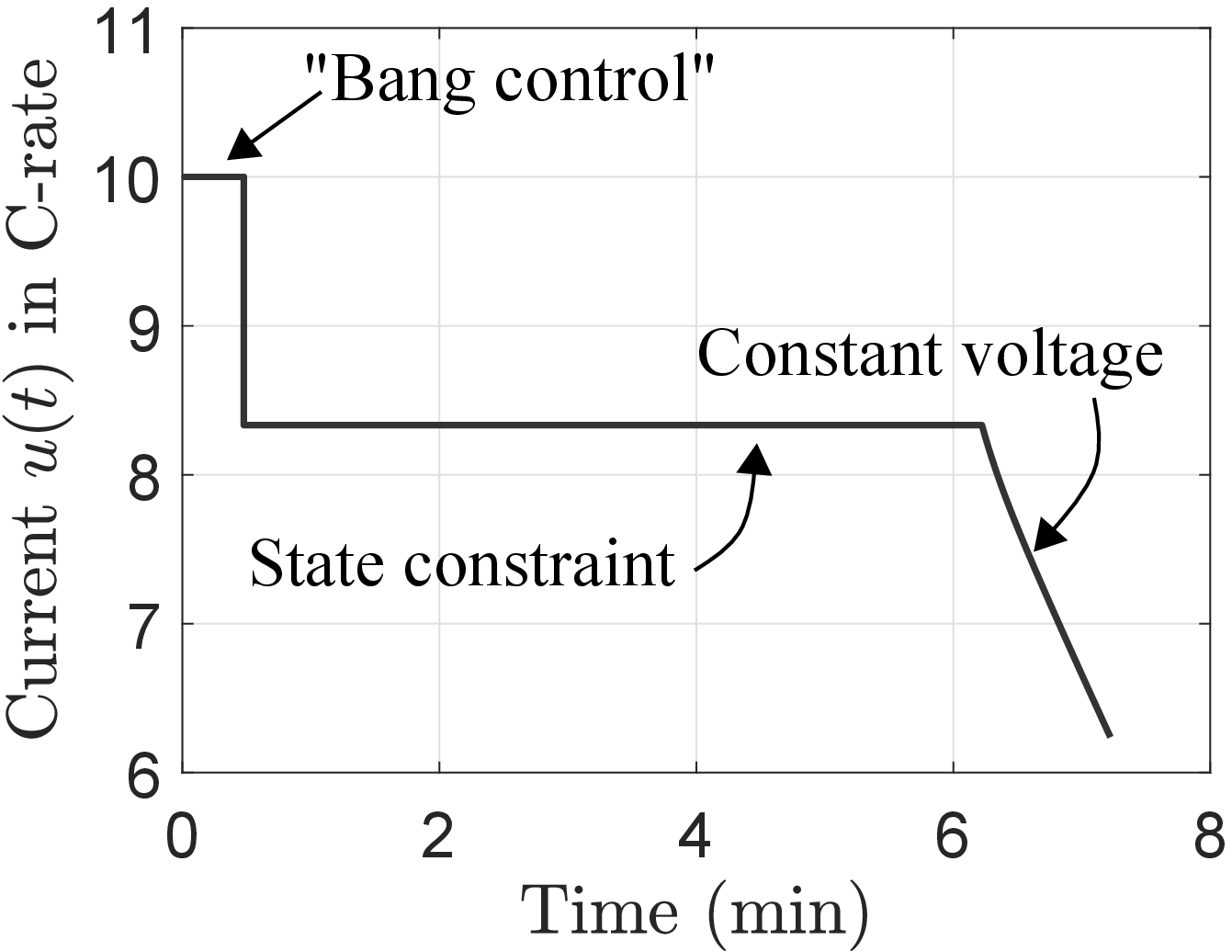}
         \caption{Charging current}
         \label{fig:y equals x}
     \end{subfigure}
     \hfill
     \begin{subfigure}[b]{0.24\textwidth}
         \centering
         \includegraphics[width=\textwidth]{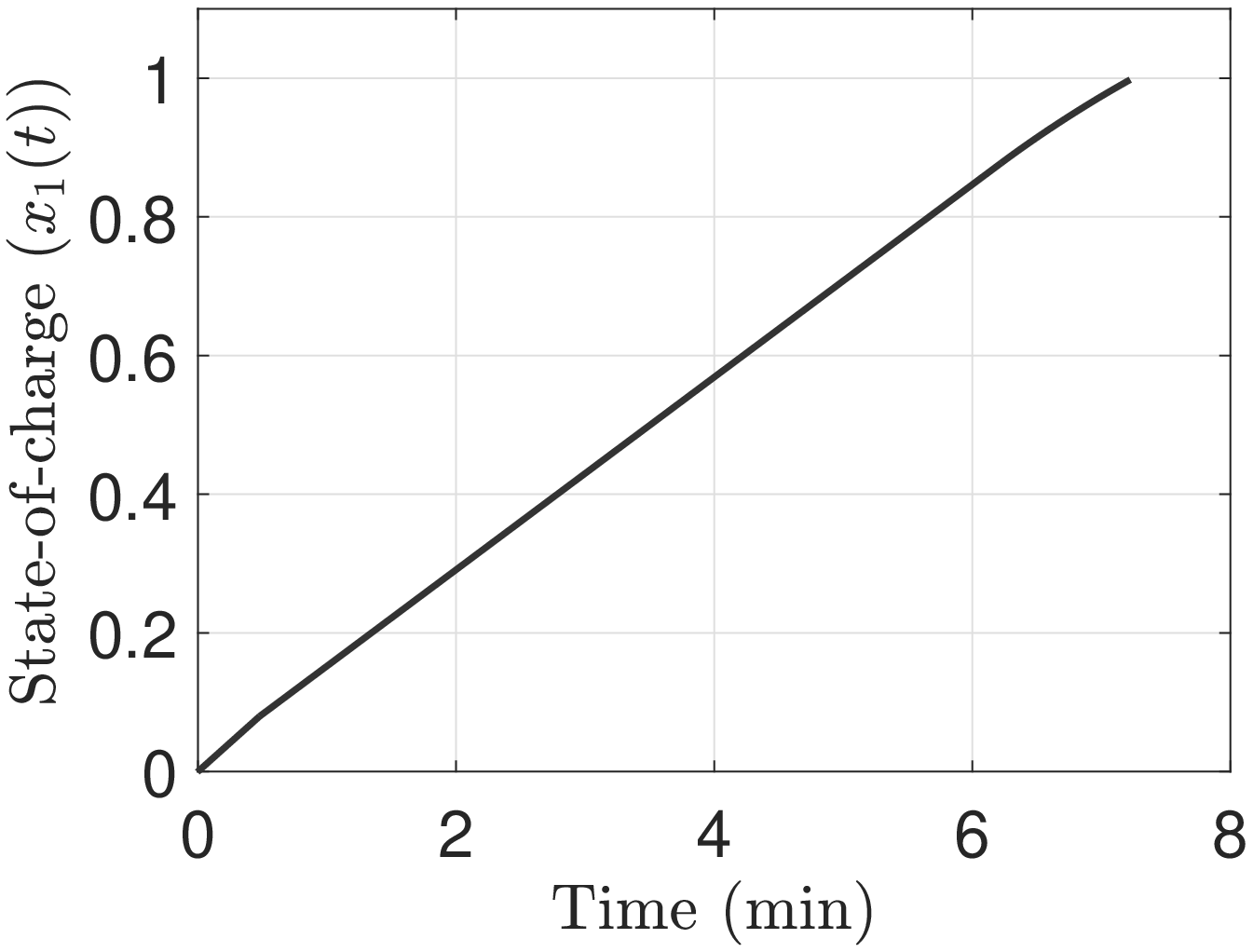}
         \caption{State-of-charge}
         \label{fig:three sin x}
     \end{subfigure}
     \hfill
     \begin{subfigure}[b]{0.24\textwidth}
         \centering
         \includegraphics[width=\textwidth]{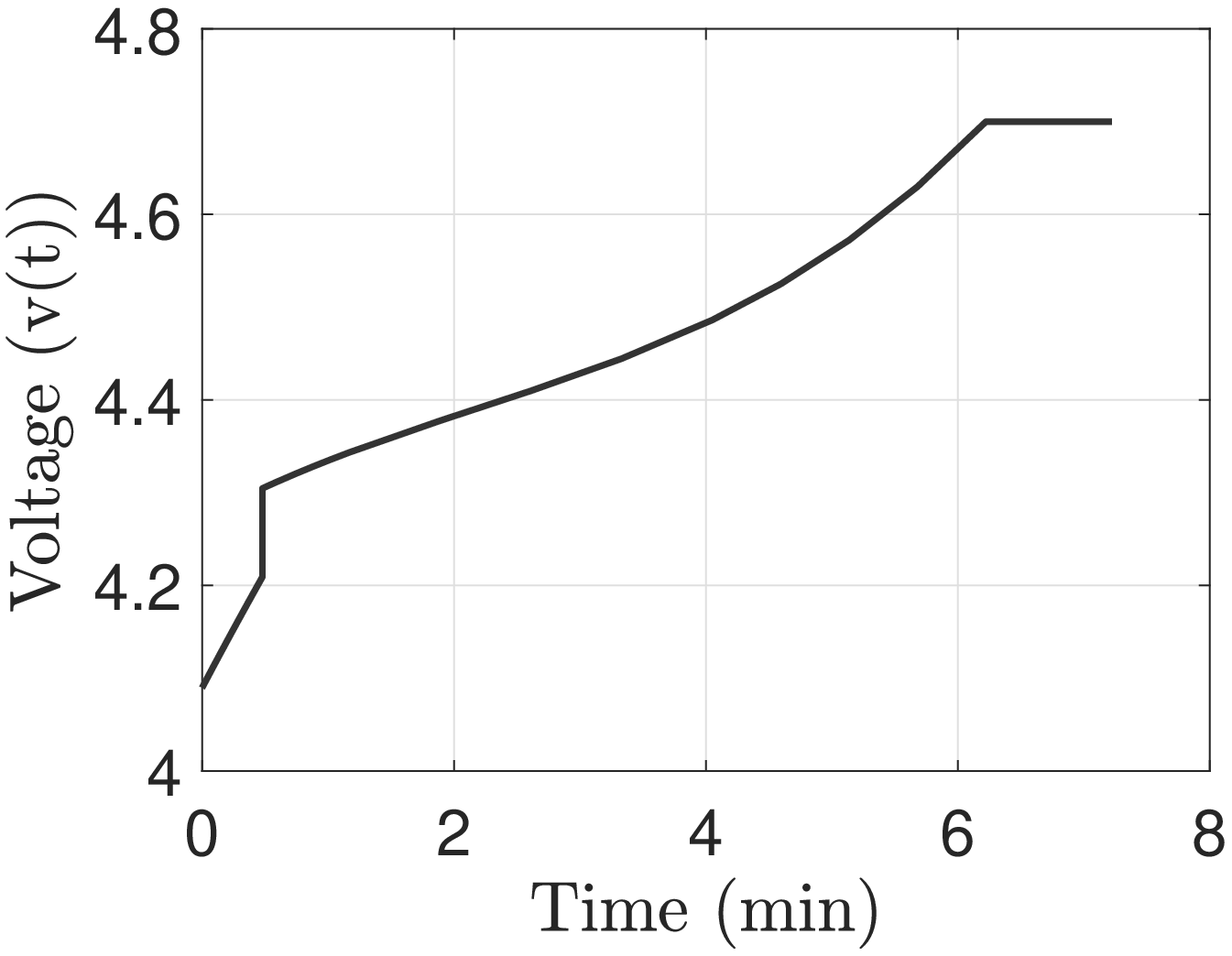}
         \caption{Voltage}
         \label{fig:five over x}
     \end{subfigure}
          \hfill
     \begin{subfigure}[b]{0.24\textwidth}
         \centering
         \includegraphics[width=\textwidth]{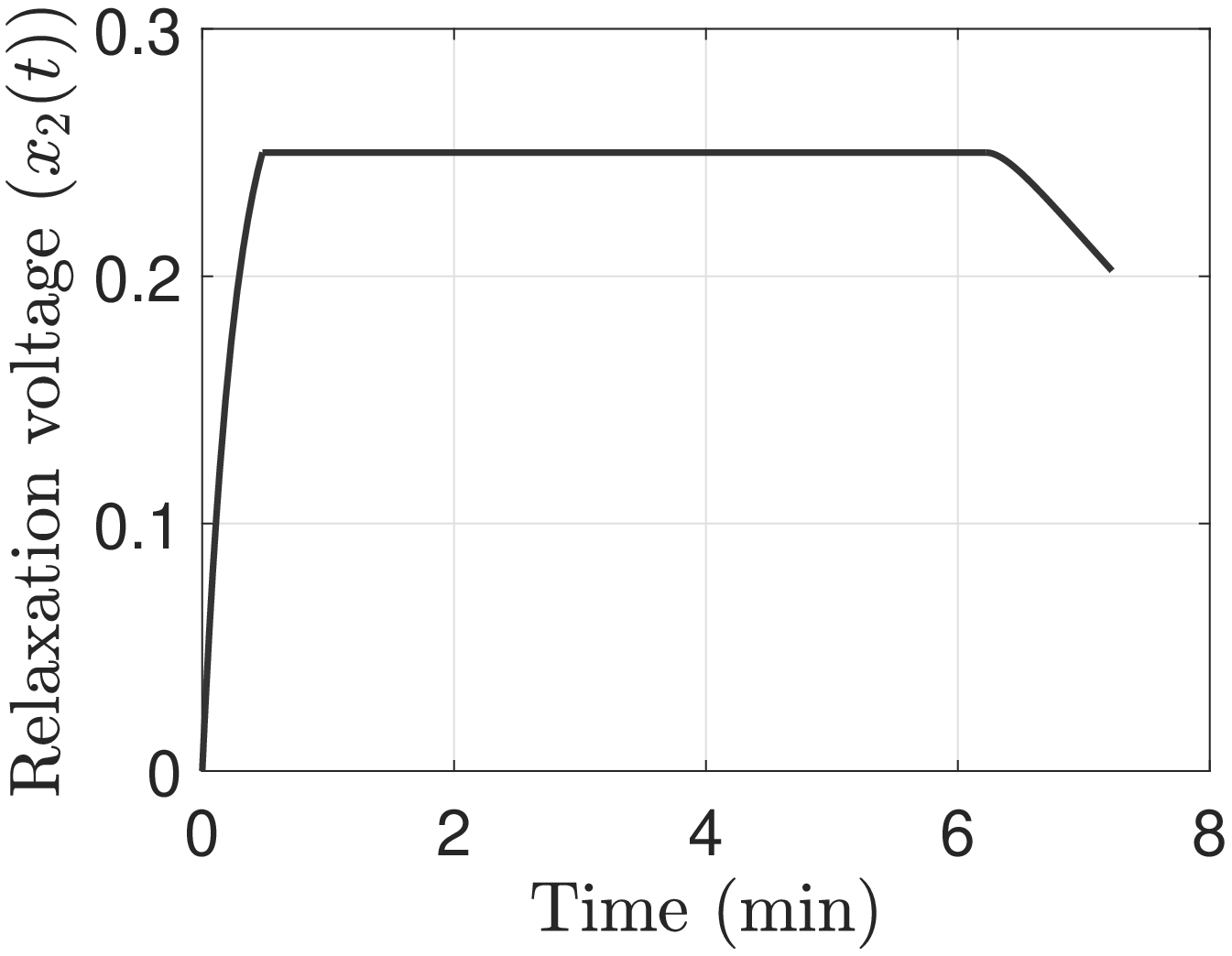}
         \caption{RC pair voltage}
     \end{subfigure}
        \caption{{  Bang-and-ride solution to the numerical example of Section {  III}. For this solution to the optimal control problem of Section I}, the obtained trajectory matches that of~\cite[Fig. 3]{ECC2022} which was computed numerically. This equivalence supports the claim of Corollary~\ref{cor:necessary} that bang-and-ride control is optimal for this class of monotone control problems.}
        \label{fig:bang_ride_sim}
\end{figure*}

\subsection{{  Cases where monotonicity is not satisfied}}

The above examples illustrate how a wide class of fast-charging problems satisfy the monotonicity assumptions~\ref{ls:H1_f}--\ref{ls:H4_h}. 
Whilst being relatively general, not all fast-charging problems are monotone. Here we explore two such situations.

(I) The monotonicity assumption of the running cost function~\ref{ls:H2_L} fails when high temperatures are penalised, such as
\[
    L(x(t),\,u(t)) = x_1(t)+u(t)-x_2(t)\,,
\]
with $x_1(t)$ being the state-of-charge and $x_2(t)$ being the cell temperature. While the temperature limits of~\eqref{eqn:T_lim} are useful for enforcing cell safety, penalising against large temperatures in the cost function  may help to reduce the onset of cell degradation caused by parasitic side-reactions, such as the growth of the solid electrolyte interface in graphite anodes.  However, the impact of high temperatures on the cell's health during fast charging is a complicated problem, as high temperatures can also reduce the likelihood of Li-plating which is one of the most significant degradation mechanisms in fast charging, as in Yang {\em et al.}~\cite{yang2019asymmetric}. The results of
~\cite{yang2019asymmetric} suggest that designing the cost function to correctly account for thermal effects in fast charging remains an open question. 

(II) The above examples all consider problems involving single cells. 
However, most energy intensive applications require battery packs composed of many individual cells connected in series and parallel, which introduces computational challenges as discussed in studies such as~\cite{pozzi2020optimal} and \cite{tanim2018fast}.
%
It is then of interest to consider whether the proposed monotonicty-based approach can be generalised to  fast charging problems of battery packs, to help  alleviate these computational issues. 

Series (also known as cascade) connections are known to preserve monotonicity, see~\cite[Proposition IV.1]{angeli2003monotone}. However, observe that bang-and-ride solutions for series connected strings will be dominated by the weakest cell, in the sense that the solution will ride the constraint associated with the lowest feasible charging current. If one of the cells is considerably weaker than the others in this series string, then the cell-to-cell variability will lead to inefficient charging.   
In contrast, connecting cells in parallel may result in a loss of monotonicity. This may be seen mathematically as a consequence of the results of~\cite{drummond2021resolving}, but is also expected in light of the observed  current ``ripples'' seen in many parallel pack simulations, such as Jocher et al.~\cite{jocher2021novel} and Drummond et al. \cite{drummond2021resolving}. These oscillations are a good indicators of a loss of monotonicity. However, the natural self-balancing of parallel connections means that as long as the cells are roughly equivalent,  it may be possible to assume that they behave as a single cell. For example, when the cells in the parallel strings are roughly equivalent, then it has been observed that the current splits across branch according to $i_k = u(t)/N$. Such an assumption is often used in practice, e.g.~in Frost et al.~\cite{frost2017completely}, and so, in this setting, monotonicity may be recovered.

\section{Numerical examples}\label{sec:num_ex}
{  In this section, a numerical example is used to illustrate the main result of Corollary~\ref{cor:necessary}--- that bang-and-ride control is optimal for control problems satisfying~\ref{ls:H1_f}--\ref{ls:H4_h}. 
Consider the equivalent circuit model of Section~\ref{sec:ecm} parameterised as in~\cite{ECC2022}. These electrical dynamics are coupled with the thermal dynamics of \eqref{sec:thermal} parameterised as in \cite{aitio2023learning}. With this model, $x_1(t)$ is the state-of-charge, $x_2(t)$ is the relaxation voltage of the circuit and $x_3(t)$ is the temperature difference between the cell and the environment. 

With these dynamics and  the initial condition $x_0 = [0,\,0,\,0]^\top$, the cost to be maximised for this fast charging problem is the integral of the state-of-charge
\begin{equation}\label{eq:cost}
 J(x_0,u) = \int_0^{\tf} x_1(t)\, \rd t\,,
\end{equation} 
Moreover, constraints on the state-of-charge, relaxation voltage, temperature current and voltage are applied
\begin{align}
       &x_1(t) \leq 1, ~
x_2(t) \leq  0.25,\\
 & x_3(t)  \leq 8, ~~
 u(t)  \leq 10 Q, v(t) \leq 4.5,
\end{align}
where $Q = 3.3\times10^3$ As is the capacitance. 

Figure~\ref{fig:bang_ride_sim} shows the results of this simulation with CC-CV charging obtained. This  solution  matches the estimated optimal solution computed via the moment-measure numerical routine of Coutier {\em et al.}~\cite{ECC2022}. The similarity of the numerically computed- and bang-and-ride-solutions: first, illustrate the result Corollary~\ref{cor:necessary} that  the constraints have to be active at some time instant during the charge. 
}

\section*{Conclusions}

A class of constrained monotone optimal control problems was studied for monotone control systems, and necessary conditions for an optimal control given. The necessary condition is essentially that, for each fixed time, a control which does not cause any constraint to engaged at all in the remaining time-window cannot be optimal. In other words, ``some constraint must be engaged at some point later''. The focus of the article was on battery fast charging and it was shown that several common battery fast-charging problems satisfy the required monotonicity assumptions. 

\bibliographystyle{IEEEtranS}
\bibliography{main_bib,Chris_bib}

\def\cprime{$'$}
\begin{thebibliography}{10}
\providecommand{\url}[1]{#1}
\csname url@samestyle\endcsname
\providecommand{\newblock}{\relax}
\providecommand{\bibinfo}[2]{#2}
\providecommand{\BIBentrySTDinterwordspacing}{\spaceskip=0pt\relax}
\providecommand{\BIBentryALTinterwordstretchfactor}{4}
\providecommand{\BIBentryALTinterwordspacing}{\spaceskip=\fontdimen2\font plus
\BIBentryALTinterwordstretchfactor\fontdimen3\font minus
  \fontdimen4\font\relax}
\providecommand{\BIBforeignlanguage}[2]{{%
\expandafter\ifx\csname l@#1\endcsname\relax
\typeout{** WARNING: IEEEtranS.bst: No hyphenation pattern has been}%
\typeout{** loaded for the language `#1'. Using the pattern for}%
\typeout{** the default language instead.}%
\else
\language=\csname l@#1\endcsname
\fi
#2}}
\providecommand{\BIBdecl}{\relax}
\BIBdecl

\bibitem{angeli2003monotone}
D.~Angeli and E.~D. Sontag, ``Monotone control systems,'' \emph{IEEE Trans.
  Automat. Cont.}, vol.~48, no.~10, pp. 1684--1698, 2003.

\bibitem{angeli2004multi}
------, ``Multi-stability in monotone input/output systems,'' \emph{Syst.
  Control Lett.}, vol.~51, no. 3-4, pp. 185--202, 2004.

\bibitem{attia2020closed}
P.~M. Attia, A.~Grover, N.~Jin, K.~A. Severson, T.~M. Markov, Y.-H. Liao, M.~H.
  Chen, B.~Cheong, N.~Perkins, Z.~Yang \emph{et~al.}, ``Closed-loop
  optimization of fast-charging protocols for batteries with machine
  learning,'' \emph{Nature}, vol. 578, no. 7795, pp. 397--402, 2020.

\bibitem{berlinerfast}
M.~D. Berliner, B.~Jiang, D.~A. Cogswell, M.~Z. Bazant, and R.~D. Braatz,
  ``Fast charging of lithium-ion batteries by mathematical reformulation as
  mixed continuous-discrete simulation,'' in \emph{Procs. of the American
  Control Conference (ACC)}.\hskip 1em plus 0.5em minus 0.4em\relax IEEE, 2022.

\bibitem{berliner2022novel}
------, ``Novel operating modes for the charging of lithium-ion batteries,''
  \emph{Journal of The Electrochemical Society}, vol. 169, no.~10, p. 100546,
  2022.

\bibitem{MR1019319}
A.~Berman, M.~Neumann, and R.~J. Stern, \emph{Nonnegative matrices in dynamic
  systems}.\hskip 1em plus 0.5em minus 0.4em\relax John Wiley \& Sons Inc., New
  York, 1989.

\bibitem{cai2022fast}
Y.~Cai, C.~Zou, Y.~Li, and T.~Wik, ``Fast charging control of lithium-ion
  batteries: Effects of input, model, and parameter uncertainties,'' in
  \emph{Procs. of the European Control Conference (ECC)}.\hskip 1em plus 0.5em
  minus 0.4em\relax IEEE, 2022, pp. 1647--1653.

\bibitem{chen2022data}
K.~Chen, K.~Zhang, X.~Lin, Y.~Zheng, X.~Yin, X.~Hu, Z.~Song, and Z.~Li,
  ``Data-enabled predictive control for fast charging of lithium-ion batteries
  with constraint handling,'' \emph{arXiv preprint arXiv:2209.12862}, 2022.

\bibitem{ECC2022}
N.~E. Courtier, R.~Drummond, P.~Ascencio, L.~D. Couto, and D.~A. Howey,
  ``Discretisation-free battery fast-charging optimisation using the
  measure-moment approach,'' in \emph{Procs. of the European Control Conference
  (ECC)}.\hskip 1em plus 0.5em minus 0.4em\relax IEEE, 2022, pp. 628--634.

\bibitem{couto2021faster}
L.~D. Couto, R.~Romagnoli, S.~Park, D.~Zhang, S.~J. Moura, M.~Kinnaert, and
  E.~Garone, ``Faster and healthier charging of lithium-ion batteries via
  constrained feedback control,'' \emph{IEEE Trans. Control Syst. Technol.},
  2021.

\bibitem{doyle1993modeling}
M.~Doyle, T.~F. Fuller, and J.~Newman, ``Modeling of galvanostatic charge and
  discharge of the lithium/polymer/insertion cell,'' \emph{J. Electrochem.
  Soc.}, vol. 140, no.~6, p. 1526, 1993.

\bibitem{drummond2019feedback}
R.~Drummond, A.~M. Bizeray, D.~A. Howey, and S.~R. Duncan, ``A feedback
  interpretation of the {D}oyle-{F}uller-{N}ewman lithium-ion battery model,''
  \emph{IEEE Trans. Control Syst. Technol.}, vol.~28, no.~4, pp. 1284--1295,
  2019.

\bibitem{drummond2021resolving}
R.~Drummond, L.~D. Couto, and D.~Zhang, ``Resolving {K}irchhoff's laws for
  parallel {L}i-ion battery pack state-estimators,'' \emph{IEEE Trans. Control
  Syst. Technol.}, 2021.

\bibitem{dufek2022developing}
E.~J. Dufek, D.~P. Abraham, I.~Bloom, B.-R. Chen, P.~R. Chinnam, A.~M.
  Colclasure, K.~L. Gering, M.~Keyser, S.~Kim, W.~Mai \emph{et~al.},
  ``Developing extreme fast charge battery protocols-{A} review spanning
  materials to systems,'' \emph{J. Power Sources}, p. 231129, 2022.

\bibitem{f2021fast}
M.~T. F.~Rodrigues, S.-B. Son, A.~M. Colclasure, I.~A. Shkrob, S.~E. Trask,
  I.~D. Bloom, and D.~P. Abraham, ``How fast can a {L}i-ion battery be charged?
  {D}etermination of limiting fast charging conditions,'' \emph{ACS Appl.
  Energy Mater.}, vol.~4, no.~2, pp. 1063--1068, 2021.

\bibitem{MR1784150}
\BIBentryALTinterwordspacing
L.~Farina and S.~Rinaldi, \emph{Positive linear systems: Theory and
  applications}.\hskip 1em plus 0.5em minus 0.4em\relax Wiley-Interscience, New
  York, 2000. [Online]. Available:
  \url{http://dx.doi.org/10.1002/9781118033029}
\BIBentrySTDinterwordspacing

\bibitem{frost2017completely}
D.~F. Frost and D.~A. Howey, ``Completely decentralized active balancing
  battery management system,'' \emph{IEEE Trans. Power Electron.}, vol.~33,
  no.~1, pp. 729--738, 2017.

\bibitem{MR2655815}
W.~M. Haddad, V.~Chellaboina, and Q.~Hui, \emph{Nonnegative and compartmental
  dynamical systems}.\hskip 1em plus 0.5em minus 0.4em\relax Princeton:
  Princeton University Press, 2010.

\bibitem{MR2182759}
M.~W. Hirsch and H.~Smith, ``Monotone dynamical systems,'' in \emph{Handbook of
  differential equations: ordinary differential equations. {V}ol. {II}}.\hskip
  1em plus 0.5em minus 0.4em\relax Elsevier B. V., Amsterdam, 2005, pp.
  239--357.

\bibitem{hu2019comparative}
X.~Hu, W.~Liu, X.~Lin, and Y.~Xie, ``A comparative study of control-oriented
  thermal models for cylindrical {L}i-ion batteries,'' \emph{IEEE Trans.
  Transport. Electrific.}, vol.~5, no.~4, pp. 1237--1253, 2019.

\bibitem{jiang2022fast}
B.~Jiang, M.~D. Berliner, K.~Lai, P.~A. Asinger, H.~Zhao, P.~K. Herring, M.~Z.
  Bazant, and R.~D. Braatz, ``Fast charging design for lithium-ion batteries
  via {B}ayesian optimization,'' \emph{Applied Energy}, vol. 307, p. 118244,
  2022.

\bibitem{jocher2021novel}
P.~Jocher, M.~Steinhardt, S.~Ludwig, M.~Schindler, J.~Martin, and A.~Jossen,
  ``A novel measurement technique for parallel-connected lithium-ion cells with
  controllable interconnection resistance,'' \emph{J. Power Sources}, vol. 503,
  p. 230030, 2021.

\bibitem{kawano2019data}
Y.~Kawano, B.~Besselink, J.~M. Scherpen, and M.~Cao, ``Data-driven model
  reduction of monotone systems by nonlinear {DC} gains,'' \emph{IEEE Trans.
  Automat. Cont.}, vol.~65, no.~5, pp. 2094--2106, 2019.

\bibitem{keil2016charging}
P.~Keil and A.~Jossen, ``Charging protocols for lithium-ion batteries and their
  impact on cycle life—an experimental study with different 18650 high-power
  cells,'' \emph{J. Energy Storage}, vol.~6, pp. 125--141, 2016.

\bibitem{maia2019expanding}
L.~K. Maia, L.~Dr{\"u}nert, F.~La~Mantia, and E.~Zondervan, ``Expanding the
  lifetime of {L}i-ion batteries through optimization of charging profiles,''
  \emph{J. Clean. Prod.}, vol. 225, pp. 928--938, 2019.

\bibitem{mohtat2021algorithmic}
P.~Mohtat, S.~Pannala, V.~Sulzer, J.~B. Siegel, and A.~G. Stefanopoulou, ``An
  algorithmic safety {VEST} for {L}i-ion batteries during fast charging,''
  \emph{IFAC-PapersOnLine}, vol.~54, no.~20, pp. 522--527, 2021.

\bibitem{moura2016battery}
S.~J. Moura, F.~B. Argomedo, R.~Klein, A.~Mirtabatabaei, and M.~Krstic,
  ``Battery state estimation for a single particle model with electrolyte
  dynamics,'' \emph{IEEE Trans. Control Syst. Technol.}, vol.~25, no.~2, pp.
  453--468, 2016.

\bibitem{ouyang2019optimal}
Q.~Ouyang, Z.~Wang, K.~Liu, G.~Xu, and Y.~Li, ``Optimal charging control for
  lithium-ion battery packs: A distributed average tracking approach,''
  \emph{IEEE Trans. Ind. Inform.}, vol.~16, no.~5, pp. 3430--3438, 2019.

\bibitem{park2020optimal}
S.~Park, D.~Lee, H.~J. Ahn, C.~Tomlin, and S.~Moura, ``Optimal control of
  battery fast charging based-on {P}ontryagin’s minimum principle,'' in
  \emph{Conference on Decision and Control (CDC)}.\hskip 1em plus 0.5em minus
  0.4em\relax IEEE, 2020, pp. 3506--3513.

\bibitem{park2020reinforcement}
S.~Park, A.~Pozzi, M.~Whitmeyer, H.~Perez, W.~T. Joe, D.~M. Raimondo, and
  S.~Moura, ``Reinforcement learning-based fast charging control strategy for
  {L}i-ion batteries,'' in \emph{Procs. of the Conference on Control Technology
  and Applications (CCTA)}.\hskip 1em plus 0.5em minus 0.4em\relax IEEE, 2020,
  pp. 100--107.

\bibitem{park2022deep}
S.~Park, A.~Pozzi, M.~Whitmeyer, H.~Perez, A.~Kandel, G.~Kim, Y.~Choi, W.~T.
  Joe, D.~M. Raimondo, and S.~Moura, ``A deep reinforcement learning framework
  for fast charging of {L}i-ion batteries,'' \emph{IEEE Trans. Transport.
  Electrific.}, vol.~8, no.~2, pp. 2770--2784, 2022.

\bibitem{patnaik2018closed}
L.~Patnaik, A.~Praneeth, and S.~S. Williamson, ``A closed-loop
  constant-temperature constant-voltage charging technique to reduce charge
  time of lithium-ion batteries,'' \emph{IEEE Trans. Ind. Electron.}, vol.~66,
  no.~2, pp. 1059--1067, 2018.

\bibitem{perez2017optimal}
H.~E. Perez, X.~Hu, S.~Dey, and S.~J. Moura, ``Optimal charging of {L}i-ion
  batteries with coupled electro-thermal-aging dynamics,'' \emph{IEEE Trans.
  Veh. Technol.}, vol.~66, no.~9, pp. 7761--7770, 2017.

\bibitem{pozzi2020optimal}
A.~Pozzi, M.~Torchio, R.~D. Braatz, and D.~M. Raimondo, ``Optimal charging of
  an electric vehicle battery pack: A real-time sensitivity-based model
  predictive control approach,'' \emph{J. Power Sources}, vol. 461, p. 228133,
  2020.

\bibitem{rantzer2015scalable}
A.~Rantzer, ``Scalable control of positive systems,'' \emph{Eur. J. Control},
  vol.~24, pp. 72--80, 2015.

\bibitem{romagnoli2019feedback}
R.~Romagnoli, L.~D. Couto, A.~Goldar, M.~Kinnaert, and E.~Garone, ``A feedback
  charge strategy for {L}i-ion battery cells based on reference governor,''
  \emph{J. Process Control}, vol.~83, pp. 164--176, 2019.

\bibitem{romagnoli2017computationally}
R.~Romagnoli, L.~D. Couto, M.~M. Nicotra, M.~Kinnaert, and E.~Garone,
  ``Computationally-efficient constrained control of the state-of-charge of a
  {Li}-ion battery cell,'' in \emph{Procs. of the Conference on Decision and
  Control (CDC)}.\hskip 1em plus 0.5em minus 0.4em\relax IEEE, 2017, pp.
  1433--1439.

\bibitem{sieg2019fast}
J.~Sieg, J.~Bandlow, T.~Mitsch, D.~Dragicevic, T.~Materna, B.~Spier,
  H.~Witzenhausen, M.~Ecker, and D.~U. Sauer, ``Fast charging of an electric
  vehicle lithium-ion battery at the limit of the lithium deposition process,''
  \emph{J. Power Sources}, vol. 427, pp. 260--270, 2019.

\bibitem{MR1319817}
H.~L. Smith, \emph{Monotone dynamical systems}, ser. Mathematical Surveys and
  Monographs.\hskip 1em plus 0.5em minus 0.4em\relax American Mathematical
  Society, Providence, RI, 1995, vol.~41, an introduction to the theory of
  competitive and cooperative systems.

\bibitem{smith2017monotone}
------, ``Monotone dynamical systems: reflections on new advances \&
  applications,'' \emph{Discrete Contin. Dyn. Syst.}, vol.~37, no.~1, p. 485,
  2017.

\bibitem{sontag2013mathematical}
E.~D. Sontag, \emph{Mathematical control theory: deterministic finite
  dimensional systems}, ser. Texts in Applied Mathematics.\hskip 1em plus 0.5em
  minus 0.4em\relax New York: Springer-Verlag, 2013, vol.~6.

\bibitem{suthar2014optimal}
B.~Suthar, P.~W. Northrop, R.~D. Braatz, and V.~R. Subramanian, ``Optimal
  charging profiles with minimal intercalation-induced stresses for lithium-ion
  batteries using reformulated pseudo 2-dimensional models,'' \emph{J.
  Electrochem. Soc.}, vol. 161, no.~11, p. F3144, 2014.

\bibitem{tanim2018fast}
T.~R. Tanim, M.~G. Shirk, R.~L. Bewley, E.~J. Dufek, and B.~Y. Liaw, ``Fast
  charge implications: Pack and cell analysis and comparison,'' \emph{J. Power
  Sources}, vol. 381, pp. 56--65, 2018.

\bibitem{tian2020real}
N.~Tian, H.~Fang, and Y.~Wang, ``Real-time optimal lithium-ion battery charging
  based on explicit model predictive control,'' \emph{IEEE Trans. Ind.
  Inform.}, vol.~17, no.~2, pp. 1318--1330, 2020.

\bibitem{tomaszewska2019lithium}
A.~Tomaszewska, Z.~Chu, X.~Feng, S.~O'Kane, X.~Liu, J.~Chen, C.~Ji, E.~Endler,
  R.~Li, L.~Liu \emph{et~al.}, ``Lithium-ion battery fast charging: A review,''
  \emph{eTransportation}, vol.~1, p. 100011, 2019.

\bibitem{vincent2017system}
T.~L. Vincent, P.~J. Weddle, and G.~Tang, ``System theoretic analysis of
  battery charging optimization,'' \emph{J. Energy Storage}, vol.~14, pp.
  168--178, 2017.

\bibitem{wassiliadis2021review}
N.~Wassiliadis, J.~Schneider, A.~Frank, L.~Wildfeuer, X.~Lin, A.~Jossen, and
  M.~Lienkamp, ``Review of fast charging strategies for lithium-ion battery
  systems and their applicability for battery electric vehicles,'' \emph{J.
  Energy Storage}, vol.~44, p. 103306, 2021.

\bibitem{yang2019asymmetric}
X.-G. Yang, T.~Liu, Y.~Gao, S.~Ge, Y.~Leng, D.~Wang, and C.-Y. Wang,
  ``Asymmetric temperature modulation for extreme fast charging of lithium-ion
  batteries,'' \emph{Joule}, vol.~3, no.~12, pp. 3002--3019, 2019.

\bibitem{zou2017electrochemical}
C.~Zou, X.~Hu, Z.~Wei, T.~Wik, and B.~Egardt, ``Electrochemical estimation and
  control for lithium-ion battery health-aware fast charging,'' \emph{IEEE
  Trans. Ind. Electron.}, vol.~65, no.~8, pp. 6635--6645, 2017.

\bibitem{zou2018model}
C.~Zou, C.~Manzie, and D.~Ne{\v{s}}i{\'c}, ``Model predictive control for
  lithium-ion battery optimal charging,'' \emph{IEEE ASME Trans. Mechatron.},
  vol.~23, no.~2, pp. 947--957, 2018.

\end{thebibliography}

\end{document}